\documentclass[11pt]{article}
\usepackage[english]{babel}
\usepackage{amsthm, amsmath, amssymb, amsbsy, amsfonts, latexsym, euscript}
\usepackage [latin1]{inputenc}
\usepackage{graphicx}
\theoremstyle{plain}
\newtheorem{theorem}{Theorem}[section]

\newtheorem{lemma}[theorem]{Lemma}
\newtheorem{corollary}[theorem]{Corollary}
\theoremstyle{definition}
\newtheorem{definition}[theorem]{Definition}

\begin{document}

\title{\bf {A  necessary condition for solvability by radicals}}

\author{Askold Khovanskii}

\maketitle
\begin{abstract}
This note was prepared as a handout for the MAT401 course ``Polynomial equations  and fields", taught at the University of Toronto  in Spring 2026.  It presents a proof of a necessary condition for the solvability of algebraic equations by radicals, based on Galois theory.
We begin with a brief overview of the relevant basic results from Galois theory, as covered in MAT401, and use -- without proof -- several standard (and relatively simple) results from the course textbook \cite{Rotman}.

The sufficient condition for solvability by radicals, which is based on  linear algebra, we will present in the next handout.
\end{abstract}

\section{Some basic theorems of Galois theory}

Our  presentation of a part of the Galois theory in MAT401 is based on the Degree  formula (see \cite{Rotman}, Lemma 49, page 53) and on  the following definitions and theorems:

\begin{definition} A finite field extension $K\subset E$ is {\it normal} if $E$ is a splitting field over $K$ of a separable polynomial $P\in K[x]$.
\end{definition}

\begin{definition} A group $G(E,K)$ of all automorphisms of $E$, which fix pointwise all elements of the field $K$, is called the {\it Galois group} of the normal extension $K\subset E$.
\end{definition}

\begin{theorem}\label{groupdegre} The Galois group $G$ of a normal field extension $K\subset E$  contains  $\# (G)=[E:K]$ elements.
\end{theorem}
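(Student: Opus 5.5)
We will prove the statement by the standard extension-of-isomorphisms argument, and we will prove a stronger claim by induction on $[E:K]$. Let $\sigma:K\to K'$ be a field isomorphism, let $E$ be a splitting field over $K$ of a separable polynomial $P$, and let $E'$ be a splitting field over $K'$ of the separable polynomial $P^\sigma$ (apply $\sigma$ to the coefficients). The claim is that $\sigma$ has exactly $[E:K]$ extensions to isomorphisms $E\to E'$. Taking $K'=K$, $E'=E$ and $\sigma=\mathrm{id}$, these extensions are precisely the elements of $G(E,K)$, which gives $\#(G)=[E:K]$.

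The base case $[E:K]=1$ is trivial: then $E=K$, and $\sigma$ is its own unique extension. For the inductive step, suppose $[E:K]>1$. Then $P$ has a root $\alpha\in E\setminus K$. Let $f\in K[x]$ be the minimal polynomial of $\alpha$, and let $d=\deg f>1$. Since $f$ divides $P$ and $P$ is separable, $f$ has $d$ distinct roots, all lying in $E$. Likewise $f^\sigma$ is irreducible over $K'$, divides $P^\sigma$, and so has $d$ distinct roots $\beta_1,\dots,\beta_d\in E'$.

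We first count the extensions of $\sigma$ to $K(\alpha)$. Any such extension must send $\alpha$ to a root of $f^\sigma$, so it sends $\alpha$ to some $\beta_i$. Conversely, for each $i$ the isomorphisms $K(\alpha)\cong K[x]/(f)\cong K'[x]/(f^\sigma)\cong K'(\beta_i)$ give an extension $\tau_i:K(\alpha)\to K'(\beta_i)$ with $\tau_i(\alpha)=\beta_i$. Hence there are exactly $d$ extensions, and $d=[K(\alpha):K]$.

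Next, $E$ is a splitting field of $P$ over $K(\alpha)$, and $E'$ is a splitting field of $P^{\tau_i}=P^\sigma$ over $K'(\beta_i)$. By the Degree formula, $[E:K(\alpha)]=[E:K]/d<[E:K]$. So the induction hypothesis applies to each $\tau_i$, and each $\tau_i$ has exactly $[E:K]/d$ extensions to $E$. Every extension of $\sigma$ to $E$ restricts to exactly one $\tau_i$ on $K(\alpha)$, so the total number of extensions is $d\cdot[E:K]/d=[E:K]$.

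The main obstacle is setting up the induction correctly. It cannot be run on automorphisms alone, because the intermediate maps $\tau_i$ go from $K(\alpha)$ to different fields $K'(\beta_i)$. This is why we generalize to isomorphisms $\sigma:K\to K'$. Separability is used at exactly one point: it guarantees that $f^\sigma$ has $d$ distinct roots, which makes the count exact.
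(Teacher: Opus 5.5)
Your proof is correct and takes essentially the same route as the paper. The paper gives no argument of its own: it cites Rotman's Theorem 51, which is exactly your strengthened count of the extensions of an isomorphism $\sigma:K\to K'$ between splitting fields of $P$ and $P^\sigma$, and then specializes to $\sigma=\mathrm{id}$ as you do. The steps you leave implicit are immediate: $E'=K'$ in the base case because $P^\sigma$ already splits over $K'$, and $P^\sigma$ inherits separability from $P$.
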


Proof of Theorem \ref{groupdegre} can be found in our textbook \cite{Rotman} (see Theorem 56,   page 60). Theorem 56 is a direct corollary of Theorem 51 (page 56).

\begin{theorem} \label{lifting}Let  $E$ be the splitting field over $K$ of a separable polynomial $P\in K[x]$, and let $\sigma :K\to K$ be an automorphism that fixes all coefficients of the polynomial $P$. Then $\sigma$ can be lifted to an automorphism $\hat \sigma:E\to E$.
\end{theorem}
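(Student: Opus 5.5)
This is a special case of the isomorphism extension theorem for splitting fields, and I will prove it by induction on the degree $[E:K]$. I state a slightly more general claim so the induction closes. Let $\tau:K\to K'$ be a field isomorphism. Let $E$ be a splitting field of $P$ over $K$, and let $E'$ be a splitting field of $\tau(P)$ over $K'$, where $\tau(P)$ is obtained by applying $\tau$ to the coefficients. Then $\tau$ extends to an isomorphism $\hat\tau:E\to E'$.

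The theorem follows by taking $K'=K$, $\tau=\sigma$ and $E'=E$. Since $\sigma$ fixes the coefficients of $P$, we have $\sigma(P)=P$, so $E$ is also a splitting field of $\sigma(P)$ over $K$.

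The induction runs as follows.

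\begin{enumerate}
\item If $[E:K]=1$, then $P$ splits into linear factors over $K$. Hence $\tau(P)$ splits over $K'$, so $E'=K'$ and $\hat\tau=\tau$ works.

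\item Otherwise $P$ has an irreducible factor $f\in K[x]$ with $\deg f>1$. Choose a root $\alpha\in E$ of $f$. Then $\tau(f)$ is irreducible over $K'$ and divides $\tau(P)$, so it has a root $\alpha'\in E'$.

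\item There are isomorphisms $K(\alpha)\cong K[x]/(f)$ and $K'[x]/(\tau(f))\cong K'(\alpha')$. Composing them with the map $K[x]/(f)\to K'[x]/(\tau(f))$ induced by $\tau$ gives an isomorphism $\tau_1:K(\alpha)\to K'(\alpha')$ that extends $\tau$ and sends $\alpha$ to $\alpha'$.

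\item $E$ is a splitting field of $P$ over $K(\alpha)$, and $E'$ is a splitting field of $\tau_1(P)=\tau(P)$ over $K'(\alpha')$. By the Degree formula, $[E:K(\alpha)]=[E:K]/\deg f<[E:K]$. The induction hypothesis applied to $\tau_1$ therefore gives the required extension $\hat\tau:E\to E'$.
\end{enumerate}

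The step that needs the most care is step 3. One must check that $\tau$ induces a ring isomorphism $K[x]\to K'[x]$ carrying the ideal $(f)$ onto $(\tau(f))$, and that $\tau(f)$ remains irreducible, so that the quotient is a field. Both facts hold because $\tau$ is a ring isomorphism on coefficients, and so it preserves factorizations.

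The induction also needs the measure to strictly decrease, which is where $\deg f>1$ is used. The degree $[E:K]$ is a suitable induction measure, since the Degree formula gives $[E:K]=[E:K(\alpha)]\,[K(\alpha):K]$ and $[K(\alpha):K]=\deg f$. Separability of $P$ is not needed for the lifting itself. It only matters for counting the number of lifts, which is what Theorem \ref{groupdegre} relies on.
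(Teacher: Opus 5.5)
Your argument is correct and follows essentially the paper's route. The paper does not prove this statement directly; it derives it as an immediate corollary of the isomorphism extension theorem for splitting fields (Theorem 51 of the course textbook), specialized exactly as you do to $K'=K$, $E'=E$, $\tau=\sigma$ using $\sigma(P)=P$. Your induction on $[E:K]$, adjoining a root $\alpha$ of an irreducible factor of degree $>1$, is the standard proof of the existence part of that theorem, and you are right that separability only matters for counting the extensions, as in Theorem \ref{groupdegre}.
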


Theorem \ref{lifting} also is a direct corollary of Theorem 51 (page 56) of  our textbook \cite{Rotman}.

\section{Immediate corollaries of basic theorems}

Let us list immediate corollaries of the Degree formula and Theorems \ref{groupdegre},~\ref{lifting}.

\begin{lemma}\label{otherfield} Let $B$ be an intermediate field of a normal field extension $K\subset E$, i.e., the inclusions  $K\subset B\subset F$  hold.  Then  the field extension $B\subset E$ is normal.
 \end{lemma}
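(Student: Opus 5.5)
The plan is to use the definition of normality directly, with the same polynomial as the witness. (The statement writes $F$ for the top field; this is clearly meant to be $E$, and I read it that way.) Since $K\subset E$ is normal, there is a separable polynomial $P\in K[x]$ such that $E$ is a splitting field of $P$ over $K$. Because $K\subset B$, the polynomial $P$ also lies in $B[x]$. I claim that $E$ is a splitting field of this same $P$ over $B$, which is exactly what normality of $B\subset E$ requires.

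First, separability of $P$ does not depend on the coefficient field, at least when separability is defined as having no repeated roots. Indeed, the roots of $P$ are the same elements of $E$ whether $P$ is viewed over $K$ or over $B$, so they are still pairwise distinct. If separability is instead defined through irreducible factors (each irreducible factor having distinct roots), I would reduce to this case as follows: every irreducible factor of $P$ over $B$ divides some irreducible factor of $P$ over $K$, so it also has distinct roots.

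Second, I check the two conditions in the definition of a splitting field over $B$.
\begin{enumerate}
\item $P$ splits into linear factors in $E[x]$. This holds because it already holds when $E$ is regarded as a splitting field over $K$.
\item $E$ is generated over $B$ by the roots $\alpha_1,\dots,\alpha_n$ of $P$. We know $E=K(\alpha_1,\dots,\alpha_n)$. Since $K\subset B\subset E$, this gives
$$E=K(\alpha_1,\dots,\alpha_n)\subset B(\alpha_1,\dots,\alpha_n)\subset E,$$
so $E=B(\alpha_1,\dots,\alpha_n)$.
\end{enumerate}
Finiteness of the extension $B\subset E$ follows from the Degree formula: $[E:B]$ divides $[E:K]$, which is finite.

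There is no real obstacle here. The only point needing care is the squeeze $E=K(\alpha_i)\subset B(\alpha_i)\subset E$ together with the remark that separability is preserved when the coefficient field is enlarged; everything else is immediate from the definitions.
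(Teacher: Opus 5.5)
Your proof is correct and follows the same route as the paper: $E$ is the splitting field over $B$ of the same separable polynomial $P\in K[x]\subset B[x]$. You fill in details the paper leaves implicit, namely that separability is preserved, the squeeze $E=K(\alpha_1,\dots,\alpha_n)\subset B(\alpha_1,\dots,\alpha_n)\subset E$, and finiteness; you are also right to read the typo $F$ as $E$.
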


\begin{proof} If $E$ is the  splitting field of a separable polynomial $P(x)\in K[x]$ over $K$ and $B$ is an intermediate field, $K\subset B \subset E$, then $E$ is the splitting field of the same  separable polynomial $P(x)$ over $B$.
\end{proof}

\begin{corollary} The Galois group $G(E,K)$ of a normal field extension $K\subset E$
 fixes only elements of the field $K$.
\end{corollary}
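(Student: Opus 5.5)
Take an element $a\in E$ fixed by every automorphism in $G(E,K)$, and consider the intermediate field $B=K(a)$, so that $K\subset B\subset E$. I will show that $[B:K]=1$, i.e.\ $a\in K$. The plan is to compare two Galois groups using Theorem \ref{groupdegre} and then apply the Degree formula.

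First, by Lemma \ref{otherfield} the extension $B\subset E$ is normal: $E$ is the splitting field over $B$ of the same separable polynomial $P$. So Theorem \ref{groupdegre} applies to it and gives $\#G(E,B)=[E:B]$.

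Next I compare $G(E,B)$ with $G(E,K)$.
\begin{itemize}
\item Every automorphism fixing $B$ pointwise fixes $K$, so $G(E,B)\subset G(E,K)$.
\item Conversely, any $\sigma\in G(E,K)$ fixes $K$ and, by hypothesis, fixes $a$. Since $B=K(a)$ consists of rational expressions in $a$ with coefficients in $K$, $\sigma$ fixes $B$ pointwise.
\end{itemize}
Hence $G(E,B)=G(E,K)$. Theorem \ref{groupdegre} applied to both normal extensions then gives $[E:B]=[E:K]$.

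Finally, the Degree formula gives $[E:K]=[E:B]\,[B:K]$. All degrees are finite, so $[B:K]=1$, which means $B=K$ and $a\in K$. This proves the statement for a single element $a$. (One could equally let $B$ be the whole fixed field of $G(E,K)$, but using $K(a)$ avoids having to check that the fixed field is a field.)

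The only point needing care is the middle step: an automorphism fixing $K$ and $a$ fixes all of $K(a)$. This is routine because automorphisms respect sums, products and inverses. Everything else is a direct application of Lemma \ref{otherfield}, Theorem \ref{groupdegre} and the Degree formula, so I expect no real obstacle.
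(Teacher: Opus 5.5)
Your proof is correct and follows the paper's argument: Lemma \ref{otherfield} and Theorem \ref{groupdegre} give $[E:B]=\#G(E,K)=[E:K]$, and the Degree formula then forces $[B:K]=1$. The only differences are minor. The paper takes $B$ to be the entire fixed field rather than $K(a)$. It also leaves implicit the identification $G(E,B)=G(E,K)$, which you verify explicitly.
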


\begin{proof} Let $B\subset E$ be a field of all pointwise  fixed elements under the action  of the group $G(E,K)$ on the field $E$. By Lemma \ref{otherfield}
and
by Theorem \ref{groupdegre},  we have $[E:B]=\#G$. By the same Theorem \ref{groupdegre}, we have $[E:K]=\#G$.
By the Degree formula, the following identity holds $[E:B] [B:K]=[E:K]$ which implies that $[B:K]=1$.  It implies  that $B=K$.
\end{proof}

\begin{definition} Let  $B$ be an  intermediate field of a normal field extension  $K\subset E$.
We denote by $G_B(E,K)$ a subgroup of the Galois group $G(E,K)$ consisting of all elements of the group $G(E,K)$ which fixes pointwise all elements  of the field $B$.
\end{definition}

 \begin{corollary}\label{subgroup}  The group $G_B(E,K)$ is naturally isomorphic to the Galois  group $G(E,B)$.
 \end{corollary}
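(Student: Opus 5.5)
The plan is to show that $G_B(E,K)$ and $G(E,B)$ are literally the same set of maps $E\to E$, with the same composition. Then the ``natural isomorphism'' is the identity map on underlying automorphisms. By Lemma \ref{otherfield}, the extension $B\subset E$ is normal, so its Galois group $G(E,B)$ is defined. It consists of all automorphisms of $E$ fixing $B$ pointwise.

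First I check the inclusion $G_B(E,K)\subset G(E,B)$. An element of $G_B(E,K)$ is, by definition, an automorphism of $E$ that lies in $G(E,K)$ and also fixes every element of $B$ pointwise. So it is an automorphism of $E$ fixing $B$ pointwise, that is, an element of $G(E,B)$.

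Next I check the reverse inclusion $G(E,B)\subset G_B(E,K)$. Let $\sigma\in G(E,B)$. Since $K\subset B$, the map $\sigma$ fixes every element of $K$, so $\sigma\in G(E,K)$. Since it also fixes $B$ pointwise, $\sigma\in G_B(E,K)$.

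Finally, both groups carry composition of maps $E\to E$ as their operation. Hence the identity map $\sigma\mapsto\sigma$ is a group isomorphism, and it is natural because it involves no choices. I do not expect any real obstacle; the only point needing care is invoking Lemma \ref{otherfield} so that $G(E,B)$ is the Galois group of a normal extension in the sense of the definitions above.

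As a side remark, Theorem \ref{groupdegre} then gives $\#G_B(E,K)=[E:B]$, which is presumably how the corollary will be used later.
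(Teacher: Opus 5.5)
Your proof is correct and takes the same approach as the paper. Both show that an element of $G(E,K)$ lies in $G_B(E,K)$ exactly when it fixes $B$ pointwise, which is exactly membership in $G(E,B)$, so the two groups coincide as sets of automorphisms of $E$ under composition; your explicit appeal to Lemma~\ref{otherfield}, to ensure that $G(E,B)$ is defined, makes explicit a point the paper leaves implicit.
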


\begin{proof} Indeed, each homomorphism $g:E\to E$ from the Galois group $G(E,K)$      is a lifting to the extension $E$ of the identity map from $K$ to $K$. It belongs to the group $G_B(E,K)$
if and only if it  fixes all elements of the  field $B$.  Such element $g$ is a lifting  of the identity isomorphism  of the field $B$ to the extension $E$.
 \end{proof}

\begin{corollary}\label{factorgroup} Let $B$ be an intermediate field of a normal field extension $K\subset E$. Assume that the extension $K\subset B$ is normal, i.e., the field $B$ is the splitting field of some separable polynomial $P\in K[x]$. Then the group  $G_B(E,K)$ is a normal subgroup of the Galois group $G(E,K)$, and the factor group $G(E,K)/G_B(E,K)$ is naturally isomorphic to the Galois group $G(B,K)$.
 \end{corollary}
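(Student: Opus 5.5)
The plan is to build the restriction homomorphism $\rho: G(E,K)\to G(B,K)$, $\rho(g)=g|_B$, and show that it is well defined, surjective, and has kernel $G_B(E,K)$. The first isomorphism theorem then gives both conclusions at once. The kernel is automatically a normal subgroup, and $G(E,K)/G_B(E,K)\cong G(B,K)$ via $\rho$.

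The first step, and the one needing care, is to show that every $g\in G(E,K)$ maps $B$ into itself. Write $B$ as the splitting field over $K$ of the separable polynomial $P\in K[x]$, so that $B=K(\alpha_1,\dots,\alpha_n)$, where $\alpha_1,\dots,\alpha_n$ are the roots of $P$ in $B$. Since $P$ splits in $B\subset E$, these are all the roots of $P$ in $E$. Because $g$ fixes the coefficients of $P$, $g(\alpha_i)$ is again a root of $P$ in $E$, hence equals some $\alpha_j$. Every element of $B$ is a rational expression in the $\alpha_i$ with coefficients in $K$, and $g$ fixes $K$, so $g(B)\subset B$.

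Next, $g|_B$ is an injective $K$-linear map of the finite-dimensional $K$-space $B$ into itself, so it is onto. Hence $g|_B\in G(B,K)$. Restriction is compatible with composition, so $\rho$ is a group homomorphism. Its kernel consists exactly of those $g$ fixing $B$ pointwise, which is $G_B(E,K)$ by definition.

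For surjectivity, I would apply Theorem~\ref{lifting} to the field $B$ in place of $K$. By Lemma~\ref{otherfield}, $E$ is the splitting field over $B$ of the same separable polynomial $Q\in K[x]$ that defines $E$ over $K$. Any $\sigma\in G(B,K)$ fixes $K$ and hence the coefficients of $Q$, so it lifts to $\hat\sigma\in G(E,K)$ with $\rho(\hat\sigma)=\sigma$.

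As an alternative to this last step, one can count. By Theorem~\ref{groupdegre} and Corollary~\ref{subgroup}, $\#G(E,K)/\#G_B(E,K)=[E:K]/[E:B]$, which equals $[B:K]=\#G(B,K)$ by the Degree formula. So the injective map induced on the quotient is also surjective.
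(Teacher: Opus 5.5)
Your proof is correct and follows the paper's argument: restriction gives a homomorphism $G(E,K)\to G(B,K)$ with kernel $G_B(E,K)$, and surjectivity comes from Theorem~\ref{lifting} applied to $E$ viewed as a splitting field over $B$. You go slightly further than the paper by justifying that each $g\in G(E,K)$ maps $B$ onto itself, which the paper only asserts. Your counting alternative via Theorem~\ref{groupdegre}, Corollary~\ref{subgroup} and the Degree formula is also valid.
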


\begin{proof} Since $B\subset E$ is the splitting  field of a polynomial over $K$, any automorphism  $g\in G(E,K)$ maps $B$ to itself. Restriction of homomorphisms $g\in G(E,K)$ to the intermediate field $B$ provides a homomorphism $\pi:G(E,K)\to G(B,K)$ of the Galois group $G(E,K)$ to the Galois group $G(B,K)$. The group $G_B(E,K)$ is the kernel of the homomorphism $\pi$. Thus, $G_B(E,K)$ is a normal subgroup of the Galois group $G(E,K)$. The homomorphism $\pi$ is surjective. Indeed, by Theorem \ref{lifting}, any
isomorphism $\sigma:B\to B$, which belongs to the Galois group $G(B,K)$, can be lifted to the isomorphism of the field $E$ over the field $K$, i.e., the isomorphism $\sigma$ belongs to the image of the group $G(E,K)$ under the homomorphism $\pi$. Thus, the factor group $G(E,K)/G(E,B)$ is isomorphic to the Galois group $G(B,K)$.
\end{proof}

\section{Another  basic theorem of Galois theory}

Let  $E$ be a splitting field  of a separable polynomial   over a field $K$. Since the extension $K\subset E$ has finite degree, any element $a\in E$ is algebraic over $K$. The Galois group $G(E,K)$ provides an explicit formula for the minimal polynomial of an element $a\in E$ over $K$.

\begin{definition} An {\it  orbit}  $O(a)$  of an element $a\in  E$ under the action of the Galois group $G(E,K)$ is the set of elements $\omega_i\in E$ representable in the form $\omega_i  = g_i(a)$, where $g_i$ is an element of the group $G(E,K)$. For an element $a\in E$ we define a {\it monic polynomial $Q_a(x)$} by the following formula:
\begin{equation}\label{formula}
Q_a(x)= \prod_{\omega_i\in O(a)}(x-\omega_i),
\end{equation}
whose the product is taken over all elements  $\omega_i$ of  the orbit $O(a)$.
\end{definition}

\begin{theorem} \label{alsep} The minimal polynomial of an element $a\in E$ is the polynomial $Q_a(x)$. All roots of this polynomial are simple, and all of them belong to the field $E$.
 \end{theorem}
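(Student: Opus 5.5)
The plan is to show three things in turn: $Q_a$ has coefficients in $K$; $Q_a$ divides every polynomial over $K$ that vanishes at $a$; and the roots of $Q_a$ are simple and lie in $E$. The last point is immediate from the definition. The $\omega_i$ are the distinct elements of the orbit $O(a)\subset E$, and in formula (\ref{formula}) each appears exactly once, so all roots of $Q_a$ are simple and belong to $E$. Also $a=\mathrm{id}(a)\in O(a)$, so $Q_a(a)=0$.

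Next I will show that $Q_a\in K[x]$. Take any $g\in G(E,K)$ and apply it to the coefficients of $Q_a$. Since $g$ is a field automorphism, the polynomial $Q_a^g(x)$ obtained this way equals $\prod_{\omega_i\in O(a)}(x-g(\omega_i))$. The map $\omega\mapsto g(\omega)$ sends the orbit $O(a)$ into itself, because $g(g_i(a))=(gg_i)(a)$. It is injective on $E$, so it is a permutation of the finite set $O(a)$. Hence $Q_a^g=Q_a$, so every coefficient of $Q_a$ is fixed by the whole group $G(E,K)$. By the Corollary stating that $G(E,K)$ fixes only elements of $K$, all coefficients lie in $K$. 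I expect this to be the main step, although the earlier corollary does the real work.

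Finally I will prove minimality. Let $P\in K[x]$ satisfy $P(a)=0$. For each $\omega_i=g_i(a)$ we have $P(\omega_i)=g_i(P(a))=g_i(0)=0$, since $g_i$ fixes the coefficients of $P$. So every $\omega_i$ is a root of $P$. The $\omega_i$ are pairwise distinct, so the pairwise coprime linear factors $x-\omega_i$ all divide $P$ in $E[x]$, and therefore their product $Q_a$ divides $P$ in $E[x]$. Division with remainder by the monic polynomial $Q_a\in K[x]$ gives the same quotient and remainder over $K$ as over $E$, so $Q_a$ divides $P$ in $K[x]$. In particular, if $P$ is the minimal polynomial of $a$, then $Q_a$ divides $P$. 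Since $P$ is irreducible and $Q_a$ is monic of positive degree, $Q_a=P$. This proves Theorem \ref{alsep}.
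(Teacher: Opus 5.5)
Your proposal is correct and follows essentially the same route as the paper. The coefficients of $Q_a$ are fixed by $G(E,K)$ because each $g$ permutes the orbit $O(a)$, so they lie in $K$ by the fixed-field corollary, and every orbit element $g_i(a)$ is a root of the minimal polynomial. You also spell out the concluding step: $Q_a$ divides the minimal polynomial in $K[x]$, and irreducibility then forces equality. The paper leaves that step implicit.
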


\begin{proof} All coefficients of the monic   polynomial
$Q_a$(x) belong to the field $K$.  Indeed, any element $g\in G(E,K)$ provides a permutation of elements  in the orbit $O(a)$. Such permutation  does not change  coefficients of the polynomial $Q_a$, since they  are symmetric functions of the elements of the orbit $O(a)$. Since  coefficients of $Q_a$ are fixed under the action of the Galois group $G(E,k)$, they belong to the field $K$. By construction, all roots of the polynomial $Q_a(x)$ are simple and all of them  belong to the field $E$.

Let us show that $Q_a(x)$ is the minimal polynomial of the element $a$ over $K$. Indeed, any automorphism $\sigma\in G(E,K)$ fixes coefficients of the minimal polynomial. Thus, it sends the  root $a$ of the minimal polynomial to a root of the minimal polynomial. Thus, all elements of the orbit $O(a)$ are  roots of the minimal polynomial of $a$.

\end{proof}

 \section{Needed results of Galois theory. Summary }

Let us summarize the above results of Galois theory which are needed for proving the necessary condition of solvability of equations by radicals.

 \begin{enumerate}

\item  For  every  normal field extension $K\subset E$ its Galois group $G(E,K)$ is defined.

\item For every intermediate field $K\subset B\subset E$ of the normal field extension $K\subset E$  the extension $B\subset E$ is normal. Its Galois group $G(EB)$ is naturally isomorphic to the subgroup $G_B(E,K)$ of the Galois group $G(E,K)$ which contains all isomorphisms $g\in G(E,K)$
    such that $g(b)=b$ for every element  $b$ of intermediate   field $B$.

\item   Different intermediate fields  $B$ correspond to different subgroups $G_B(E,K)$: the  subgroup  $G_B(E,K)$ fixes only the  elements of the   intermediate field $B$.

 \item If an intermediate field $B$ is a splitting field of a separable polynomial over $K$, then the group $G_B(E,B)$ (which is isomorphic to the Galois group $ G(E,B)$)  is a normal subgroup in the Galois  $G(E,K)$. The factor group $G(E,K)/G_B(E,K)$ is isomorphic to the Galois group $G(B,K)$.
     
\item Moreover, Galois group $G(E,K)$ allows to provide an explicit formula for the minimal polynomial $Q_a(x)$  over $K$ of any element $a\in E$, see~(\ref{formula}).
\end{enumerate}

\section{Nested set of normal extensions}

We start with the following definition.

\begin{definition}  A set of  increasing   fields
\begin{equation}\label{fields}
K=E_0\subset E_1\subset\dots\subset E_n
\end{equation}
we call  a   {\it nested set  of normal extensions} of the ground field $K$ if for any  $i= 1,\dots, n$ the  field
$E_i$ is a normal extension of the ground field $K$.
\end{definition}

\begin{definition}
A decreasing chain of groups
\begin{equation}\label{groups}
G=G_0\supset G_1\dots\supset G_n=e
\end{equation}
is {\it associated} with the nested set of normal extensions (\ref{fields}) if the group
$G=G_0$ is  the Galois group $G(E_n,E_0)$ of the normal extension $E_0\subset E_n$,
and for $i=1,\dots, n$ the group  $G_i$ is the  subgroup  $G_{E_i} (E_n,E_0)$ of the group $G=G(E_n,E_0)$ which fixes all elements of  the field  $E_i$ pointwise.
\end{definition}

\begin{lemma}\label{chain} For any nested set of normal extensions  (\ref{fields}) and  for any $i= 0, \dots, n-1$ the  group   $G_{i+1}$ is a normal subgroup of the group $G_i$ from (\ref{groups}), and the factor group $G_i/G_{i+1}$ is isomorphic to the Galois group $G(E_{i+1}, E_i)$ of the normal extension $E_i\subset E_{i+1}$.
\end{lemma}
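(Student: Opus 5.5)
The plan is to move the whole problem into the normal extension $E_i\subset E_n$ and then apply Corollary \ref{factorgroup} there. Fix $i$ with $0\le i\le n-1$. Since $E_n$ is normal over $K=E_0$ and $E_i$ is an intermediate field, Lemma \ref{otherfield} shows that $E_i\subset E_n$ is normal. By Corollary \ref{subgroup}, the group $G_i=G_{E_i}(E_n,E_0)$ is naturally identified with $G(E_n,E_i)$; for $i=0$ this is just $G_0=G(E_n,E_0)$. The identification is literal: both groups are the set of automorphisms of $E_n$ fixing $E_i$ pointwise, because fixing $E_i$ pointwise forces fixing $K\subset E_i$.

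Similarly, $G_{i+1}=G_{E_{i+1}}(E_n,E_0)$ consists of the automorphisms of $E_n$ fixing $E_{i+1}$ pointwise. Since $E_i\subset E_{i+1}$, this set equals $G_{E_{i+1}}(E_n,E_i)$, the subgroup of $G(E_n,E_i)$ fixing $E_{i+1}$. So in the extension $E_i\subset E_n$, with intermediate field $B=E_{i+1}$, the pair $G_{i+1}\subset G_i$ becomes $G_B(E_n,E_i)\subset G(E_n,E_i)$.

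To apply Corollary \ref{factorgroup} to $E_i\subset E_{i+1}\subset E_n$, we need $E_i\subset E_{i+1}$ to be normal, i.e.\ $E_{i+1}$ must be the splitting field over $E_i$ of a separable polynomial with coefficients in $E_i$. By hypothesis $E_{i+1}$ is the splitting field over $K$ of a separable $P\in K[x]$. Since $K\subset E_i\subset E_{i+1}$, we have $P\in E_i[x]$, and $E_{i+1}$ is generated over $E_i$ by the roots of $P$, because it is already generated by them over the smaller field $K$. This is exactly the argument of Lemma \ref{otherfield}, applied with $E_{i+1}$ in place of $E$.

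Corollary \ref{factorgroup} then gives that $G_{E_{i+1}}(E_n,E_i)=G_{i+1}$ is a normal subgroup of $G(E_n,E_i)=G_i$, and that $G_i/G_{i+1}\cong G(E_{i+1},E_i)$. The isomorphism is induced by restricting automorphisms from $E_n$ to $E_{i+1}$. Surjectivity of this restriction comes from the lifting Theorem \ref{lifting}, already used inside Corollary \ref{factorgroup}.

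The only point needing care is the bookkeeping in the first two paragraphs: the subgroups $G_i$ and $G_{i+1}$, defined inside $G(E_n,E_0)$, must coincide as sets with the groups in Corollary \ref{factorgroup} taken over the new base field $E_i$. I expect no real obstacle beyond stating this identification explicitly.
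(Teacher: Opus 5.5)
Your proposal is correct and follows the same route as the paper: you identify $G_i$ with $G(E_n,E_i)$ via Corollary \ref{subgroup}, then apply Corollary \ref{factorgroup} to the intermediate field $E_{i+1}$ of the normal extension $E_i\subset E_n$, using that $E_i\subset E_{i+1}$ is normal. You are more explicit than the paper, which leaves implicit both the identification $G_{i+1}=G_{E_{i+1}}(E_n,E_i)$ and the fact (the argument of Lemma \ref{otherfield}) that $E_i\subset E_{i+1}$ and $E_i\subset E_n$ are normal.
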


\begin{proof} For any $i=1,\dots,n-1$  the group $G_i$ is the Galois group of the normal extension $E_i\subset E_n$. Indeed, this statement follows from Corollary \ref {subgroup} applied to the intermediate field $E_i$  of the normal field extension extension  $K\subset E_n$.

 Now, the lemma follows from Corollary \ref{factorgroup} applied for the intermediate field $t E_{i+1}$ of the field extension extension  $E_i\subset E_n$, since the extension $E_i\subset E_{i+1}$ is normal.

 \end{proof}

\begin{corollary}\label{norm}  If, in the assumptions of Lemma\ref{chain}, for each $i=0,\dots,n-1$ the  Galois group of the normal field  extension $E_i\subset E_{i+1}$ is commutative, then the Galois group of the normal field extension  $K=E_0\subset E_n$ is solvable.
 \end{corollary}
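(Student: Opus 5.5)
The plan is to use the decreasing chain of groups (\ref{groups}) associated with the nested set of normal extensions (\ref{fields}), and to check that it is a subnormal series with abelian factors. This is the definition of a solvable group.

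First, I will take $G=G_0=G(E_n,E_0)$ and the subgroups $G_i=G_{E_i}(E_n,E_0)$ for $i=1,\dots,n$.

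Next, I will check the endpoints of the chain. The group $G_0$ fixes pointwise exactly $E_0=K$, so $G_0=G_{E_0}(E_n,E_0)$ is the whole group $G$. The group $G_n$ consists of automorphisms of $E_n$ fixing every element of $E_n$, so $G_n=e$. The inclusions $G_{i+1}\subset G_i$ hold because an automorphism fixing $E_{i+1}$ pointwise also fixes the smaller field $E_i\subset E_{i+1}$ pointwise.

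Then I will invoke Lemma \ref{chain}. For each $i=0,\dots,n-1$ it gives that $G_{i+1}$ is a normal subgroup of $G_i$, and that the factor group $G_i/G_{i+1}$ is isomorphic to the Galois group $G(E_{i+1},E_i)$. By the hypothesis of the corollary, this Galois group is commutative. Hence every factor $G_i/G_{i+1}$ is commutative.

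Therefore $G=G_0\supset G_1\supset\dots\supset G_n=e$ is a chain of subgroups in which each subgroup is normal in the previous one with commutative quotient. By definition, $G(E_n,K)$ is solvable.

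The only point needing care is that Lemma \ref{chain} requires the extension $E_i\subset E_{i+1}$ to be normal. This follows from Lemma \ref{otherfield}: $E_{i+1}$ is the splitting field over $K$ of a separable polynomial, hence also the splitting field of the same polynomial over the intermediate field $E_i$. This was already used inside the proof of Lemma \ref{chain}, so I expect no real obstacle; the corollary is essentially a restatement of Lemma \ref{chain} in the language of solvable groups.
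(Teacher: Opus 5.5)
Your proposal is correct and follows the paper's proof: by Lemma \ref{chain}, the associated chain $G_0\supset G_1\supset\dots\supset G_n=e$ has each $G_{i+1}$ normal in $G_i$, and the factors $G_i/G_{i+1}\cong G(E_{i+1},E_i)$ are commutative by hypothesis, so $G(E_n,K)$ is solvable. Your extra checks of the endpoints, the inclusions, and the normality of $E_i\subset E_{i+1}$ via Lemma \ref{otherfield} are harmless details that the paper leaves implicit.
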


 \begin{proof}  By Lemma \ref{chain}, the chain of groups (\ref{groups}), associated with the nested set of normal extensions for each $i=0,1,\dots,n-1$,  the group $G_{i+1}$ is the normal subgroup in $G_i$, and the factor group $G_i/G_{i+1}$ is the Galois group of the normal extension $E_i\subset E_{i+1}$ which, by the assumption, is commutative. Thus, the group $G$ is solvable.
 \end{proof} 

\section{Chains and nested sets of radical field extensions}

Starting from this section, we will assume, by default, that all fields we are dealing with have zero characteristic.

\begin{definition} A {\it  chain of  radical extensions} of a ground field $K$  is a chain  of field extensions
\begin{equation}\label{radicals}
K=R_0\subset R_1\subset \dots \subset R_n,
\end{equation}
such that for each $i=1,\dots,n$
the field $R_i$ is obtained from the field $R_{i-1}$ by adjoining  an element $a_i$,  satisfying a relation $a_i^{k_i}\in  R_{i-1}$, where $k_i$ is  a natural number which we will call  the  {\it $i$-th characteristic degree} of the radical chain (\ref{radicals}).
\end{definition}

\begin{definition} Let $P(x)\in K[x]$ be a polynomial over a ground  field $K$. The equation $P(x)=0$ over $K$ is {\it solvable by radicals} if there is a chain of radical extensions (\ref{radicals}) and a homomorphism  $\pi:E\to R_n$ of the splitting field $E$ of the polynomial $P$ over the ground field $K$, such that its restriction  to the ground  field $K$ is the identity map.
\end{definition}

Unfortunately, for a chain of  radical extensions (\ref{radicals}) the field extension $K\subset R_n$, in general, is not a normal extension and its Galois group is not defined.

Below, we  define a nested set of normal radical extensions (\ref{fields}) which is almost as simple as a chain of radical extensions, but for which the Galois group $G(E_n,K)$ is defined.

\begin{definition}  A nested set of normal field extensions (\ref{fields}) we will call  a   {\it nested set  of normal  radical extensions} if

\begin{enumerate}
\item the field $E_1$ is the splitting field over the  ground field $K=E_0$ of a polynomial $x^N-1$,  where $N$ is a natural number;
 \item for any $i=2,\dots,n$ the field extension $K\subset E_i$ is normal;
 \item  for any $i=2,\dots,n$  the field  $E_i$ is the splitting field over the field $E_{i-1}$ of a polynomial $Q_{i-1}(x)=\prod_{\omega _m\in O_{i-1}} (x^{k_i}-\omega_m)$,  where $O_{i-1}$ is a finite subset of the field  $E_{i-1}$ and $k_i$ is a natural number, which divides the number $N$ from 1.
\end{enumerate}
\end{definition}

\section{Normalization of a chain of radical extensions}

\begin{definition} A nested set of normal radical extensions
\begin{equation}\label{nestedrad}
 K=E_0\subset K_u=E_1\subset E_2\subset \dots\subset E_{n+}
 \end{equation}
we will call a {\it normalization} of  a given chain of radical extension (\ref{radicals}) if the following conditions hold:
 \begin{enumerate}
\item the field $K_u=E_1$ is the splitting field of the polynomial $x^N-1$ over $K$, where $N$ is the least common multiple of the set $\{k_i\}$  of characteristic degrees $k_i$ of the radical chain (\ref{radicals});
\item for $i=1,\ldots, n$ the inclusion $R_i\subset E_{i+1}$ holds.
\end{enumerate}
\end{definition}

\begin{theorem}\label{exorm} For any chain of radical extensions of a ground field $K$ one can construct its normalization.
\end{theorem}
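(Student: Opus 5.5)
We construct the fields $E_1,\dots,E_{n+1}$ inductively inside a fixed algebraic closure $\overline{R_n}$ of $R_n$, which we use as an ambient field. Let $N$ be the least common multiple of the characteristic degrees $k_1,\dots,k_n$. Put $E_1=K_u$, the splitting field of $x^N-1$ over $K$ inside $\overline{R_n}$. In characteristic zero the polynomial $x^N-1$ is separable, since its derivative $Nx^{N-1}$ has no common root with it. Hence $K\subset E_1$ is normal, and condition~1 of a nested set of normal radical extensions holds.

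For the inductive step, assume that $E_i$ has been constructed, that $K\subset E_i$ is normal (the splitting field of a separable polynomial $P_i\in K[x]$), and that $R_{i-1}\subset E_i$. For $i=1$ the last inclusion is $R_0=K\subset E_1$. The element $a_i$ satisfies $a_i^{k_i}=b_i\in R_{i-1}\subset E_i$. Let $O_i=O(b_i)$ be the orbit of $b_i$ under the Galois group $G(E_i,K)$. Define $E_{i+1}$ as the splitting field over $E_i$, taken inside $\overline{R_n}$ and containing $a_i$, of
\[
Q_i(x)=\prod_{\omega_m\in O_i}(x^{k_i}-\omega_m).
\]
Since $b_i\in O_i$, the element $a_i$ is a root of $Q_i$, so $a_i\in E_{i+1}$. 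Together with $R_{i-1}\subset E_i$ this gives $R_i=R_{i-1}(a_i)\subset E_{i+1}$, which is condition~2 of a normalization. Condition~3 of a nested set of normal radical extensions holds by construction, because $k_i$ divides $N$.

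The main obstacle is to show that $K\subset E_{i+1}$ is normal. By Theorem~\ref{alsep}, the polynomial $M_i(x)=\prod_{\omega_m\in O_i}(x-\omega_m)$ is the minimal polynomial of $b_i$ over $K$, so it has coefficients in $K$. Then $M_i(x^{k_i})=Q_i(x)\in K[x]$. I claim that $E_{i+1}$ is the splitting field over $K$ of $P_i\cdot Q_i$, after passing to the squarefree part.

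Indeed, $E_{i+1}$ is generated over $K$ by the roots of $P_i$ together with the roots of $Q_i$, and it contains all roots of both polynomials. Hence it is the splitting field over $K$ of the product.

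Separability needs some care. $E_{i+1}$ is also the splitting field of the squarefree part $P_i Q_i/\gcd\bigl(P_iQ_i,(P_iQ_i)'\bigr)$. This polynomial lies in $K[x]$ and has the same roots, and in characteristic zero it is separable. One can also check separability of $Q_i$ directly. Each $\omega_m$ is nonzero whenever $b_i\neq 0$, so $x^{k_i}-\omega_m$ has distinct roots. The factors for distinct $\omega_m$ share no root, since a common root $x$ would force $\omega_m=x^{k_i}$ to be equal. The case $b_i=0$ is trivial: then $a_i=0$, and we may take $E_{i+1}=E_i$, or keep $Q_i=x^{k_i}$ and use the squarefree part, which is just $x$.

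So $K\subset E_{i+1}$ is normal, which completes the induction. After $n$ steps we obtain the tower~(\ref{nestedrad}) with $E_{n+1}$ as its last term, and this tower is a normalization of the chain~(\ref{radicals}).
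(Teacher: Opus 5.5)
Your proposal is correct and follows the same construction as the paper. You take $E_1=K_u$, and you define $E_{i+1}$ by adjoining to $E_i$ all roots of $Q_b(x^{k_i})$, where $Q_b$ is the orbit polynomial of $b=a_i^{k_i}$ from Theorem~\ref{alsep}, so that its coefficients lie in $K$. You also make explicit several points the paper glosses over: the ambient field that guarantees $a_i\in E_{i+1}$, why $E_{i+1}$ is the splitting field over $K$ of a separable polynomial (the squarefree part of $P_iQ_i$), and the degenerate case $b_i=0$.
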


\begin{proof}
We construct the nested set of normal radical extensions inductively. The field $K_u=E_1$ is already defined: it is the splitting field of the polynomial $x^N-1$ over the ground field $K$, where $N$  is the least common multiple of the set of the characteristic degrees $k_i$ of the radical chain (\ref{radicals}).

Assume that we already constructed a field $E_{i}$ which is a normal extension of the ground field $K$ and contains the field $R_{i-1}$. By definition, the field $R_i$ is obtained from the field $R_{i-1}$ by adjoining a element $a_i$ such that the element $a^{k_i}_i=b$ belongs to the field $R_{i-1}\subset E_i$.

By induction, the field $E_i$ is a normal extension of the field $K$. By Theorem \ref{alsep}, the minimal polynomial $Q_b(x)$ of the element $b\in E_{i-1}$ is given by  formula $Q_b(x)= \prod_{\omega_m\in O(b)}(x-\omega_m)$, where the product is taken over the orbit $O(b)$ of the element $b\in E_i$ under the action of the Galois group $G(E_i,K)$. We define the field $E_{i+1}$ as the splitting field of the polynomial $Q_b(x^{k_i})= \prod_{\omega_m\in O(b)}(x^{k_i}-\omega_m)$ whose coefficients belong to the ground field $K$. Thus, the field extension $K\subset E_{i+1}$ is normal. On the other hand, the field $E_{+1}$ is obtained from the field $E_i$ by adjoining all solutions  of equations $x^{k_i}=\omega_m$, where the elements $\omega_m$ belong to orbit $O(b)\subset E_i$. Thus, the field $E_{i+1}$ satisfies both conditions 1) and 2). The inductive construction is completed.
\end{proof}

\section{Galois groups of radical field extensions}

\begin{lemma}\label{addmul} Consider the ring $\Bbb Z/ (n)$ of integers modulo $n$ for some  $n>1$. The group of all automorphisms of the additive group $\Bbb Z_n$ of this ring   is isomorphic to its multiplicative group $U(n)$.
\end{lemma}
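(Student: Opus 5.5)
The plan is to write down an explicit map from $U(n)$ to the automorphism group of the additive group $\Bbb Z_n$ and check that it is a group isomorphism. For a residue class $u\in U(n)$, i.e. a class invertible modulo $n$, define $\phi_u:\Bbb Z_n\to\Bbb Z_n$ by $\phi_u(x)=ux$. By distributivity in the ring $\Bbb Z/(n)$ we have $\phi_u(x+y)=\phi_u(x)+\phi_u(y)$, so $\phi_u$ is an endomorphism of the additive group. It is bijective because $\phi_{u^{-1}}$ is a two-sided inverse: $u^{-1}(ux)=x$. So $u\mapsto \phi_u$ is a map $\Phi:U(n)\to \mathrm{Aut}(\Bbb Z_n)$. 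It is a homomorphism, since $\phi_{uv}(x)=uvx=\phi_u(\phi_v(x))$, where commutativity of the ring makes the order of composition harmless.

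Injectivity follows by evaluating at the generator $1$: $\phi_u(1)=u$, so $\phi_u=\phi_v$ forces $u=v$.

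The only step with content is surjectivity. Let $\psi$ be any automorphism of the additive group $\Bbb Z_n$ and set $u=\psi(1)$. Every $x\in\Bbb Z_n$ is the class of a natural number $m$, i.e. $x=1+\dots+1$ ($m$ times). Additivity then gives $\psi(x)=m\psi(1)=mu=ux$, so $\psi=\phi_u$ as maps. It remains to check that $u$ is a unit. Since $\psi$ is surjective, there is $y$ with $\psi(y)=1$, that is, $uy=1$ in $\Bbb Z/(n)$, so $u\in U(n)$.

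I expect this surjectivity step to be the main point. The idea is that the additive group is cyclic, generated by $1$, so an automorphism is determined by the image of $1$, and that image must itself generate. Generating is equivalent to being invertible modulo $n$, and surjectivity of $\psi$ gives this directly. Therefore $\Phi$ is an isomorphism between $U(n)$ and the automorphism group of $\Bbb Z_n$.
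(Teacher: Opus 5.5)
Your proof is correct and follows the paper's argument: an automorphism $\psi$ is determined by $u=\psi(1)$, acts as multiplication by $u$, $u$ must be a unit, and conversely multiplication by any unit is an automorphism. You go slightly beyond the paper by checking explicitly that $u\mapsto\phi_u$ respects composition, and by deriving that $u$ is invertible directly from surjectivity ($uy=1$) rather than from ``generators go to generators''; both only fill in details the paper leaves implicit.
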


\begin{proof} An automorphism $g$ of the  group $\Bbb Z_n$ sends the generator $(1\mod n)$ of the group to another its generator. Thus, $g(1)\equiv m \mod n$, where $m$ is relatively prime number with $n$, which could be considered as an element of the group $U(n)$. Since $g$ is an automorphism of $\Bbb Z_n$, we have  $g(k)\equiv mk \mod n$. Thus, $g$  acts on the group  $\Bbb Z_n$ by multiplication on an invertible element $m$. On the other hand,  the multiplication on any invertible element $m$  provides an automorphism  of the  group $\Bbb Z_n$.
\end{proof}

\begin{lemma}\label{uroots}  Let $E$ be a splitting field over a ground field $K$ of a polynomial $x^n-1$, where $n>1$  is a natural number. Then the Galois group $G(E,K)$ is isomorphic to a subgroup of  $U(n)$. In particular, the group $G(E,K)$ is commutative.
\end{lemma}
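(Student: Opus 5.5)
The plan is to exhibit a primitive $n$-th root of unity in $E$ and read off each Galois automorphism from its action on that one element.

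First we check that such a root exists. Since the characteristic of $K$ is zero, the polynomial $x^n-1$ is separable: its derivative $nx^{n-1}$ has only the root $0$, which is not a root of $x^n-1$. Hence $x^n-1$ has exactly $n$ distinct roots in $E$, so the normality assumption of the statement is satisfied. These roots form a subgroup $\mu_n$ of the multiplicative group $E^*$ of order $n$.

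The main obstacle is showing that $\mu_n$ is cyclic. The argument runs as follows.
\begin{enumerate}
\item Let $m$ be the maximal order of an element of the finite abelian group $\mu_n$.
\item A standard lemma says that in a finite abelian group the order of every element divides the maximal order $m$. One proves it by combining elements of coprime orders into an element whose order is the least common multiple.
\item Therefore every element of $\mu_n$ is a root of $x^m-1$.
\item This polynomial has at most $m$ roots in the field $E$, so $n\le m$.
\item Since $m$ divides $n$, we get $m=n$.
\end{enumerate}
So $\mu_n$ is cyclic, generated by some $\zeta$ of order $n$. This gives an isomorphism $\Bbb Z_n\to\mu_n$, $k\mapsto\zeta^k$, and $E=K(\zeta)$ because all roots of $x^n-1$ are powers of $\zeta$.

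Next, take $g\in G(E,K)$. Since $g$ is a field automorphism, it sends roots of $x^n-1$ to roots and preserves multiplication. So $g$ restricts to an automorphism of the group $\mu_n\cong\Bbb Z_n$. By Lemma \ref{addmul}, there is a unique $m_g\in U(n)$ with $g(\zeta^k)=\zeta^{m_g k}$ for all $k$. Define $\varphi:G(E,K)\to U(n)$ by $\varphi(g)=m_g$.

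Now we check the properties of $\varphi$.
\begin{enumerate}
\item It is a homomorphism: $(gh)(\zeta)=g(\zeta^{m_h})=\zeta^{m_g m_h}$, so $m_{gh}=m_g m_h$ modulo $n$.
\item It is injective: if $m_g=1$, then $g$ fixes $\zeta$. Since $E=K(\zeta)$ and $g$ fixes $K$ pointwise, $g$ is the identity.
\end{enumerate}
Thus $G(E,K)$ is isomorphic to the subgroup $\varphi(G(E,K))$ of $U(n)$. Since $U(n)$ is commutative, so is $G(E,K)$.
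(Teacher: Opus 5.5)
Your proof is correct and follows the same route as the paper: identify the group of $n$-th roots of unity with $\mathbb{Z}_n$, let each element of $G(E,K)$ act on it by a group automorphism, use Lemma \ref{addmul} to land in $U(n)$, and get injectivity because the roots generate $E$ over $K$. The only difference is that you also prove two facts the paper uses without proof: that $x^n-1$ is separable in characteristic zero, and that the group of its roots is cyclic (via the maximal-order argument).
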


\begin{proof} Roots of the polynomial $x^n-1$ in the splitting field $E$ form a multiplicative group which is isomorphic to the additive cyclic group $\Bbb Z_n$. Each element of the Galois group $G(E,K)$ provides an automorphism of the group $\Bbb Z_n$. Thus, we have a natural homomorphism $\pi:G(E,K)\to U(n)$. The homomorphism $\pi$ has no kernel  since  roots of  the polynomial $x^n-1$ generate the field $E$ over the field $K$.
 Thus, the group $G(E,K)$ is isomorphic to a subgroup of the group $U(n)$(which is equal to the image of the group $G(E,K)$ under the homomorphism $\pi$);
\end{proof}

\begin{lemma}\label{rroot} Let $E$ be a splitting field over a ground field $K$ of a polynomial $x^n-a$, where $n>1$  is a natural number and $a\in K$. Assume that $K$ contains all roots of unity of degree $n$.  Then the Galois group $G(E,K)$ is isomorphic to a subgroup of  the cyclic additive group $\Bbb Z_n$. In particular, the group $G(E,K)$ is commutative.
\end{lemma}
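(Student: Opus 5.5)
We mirror the proof of Lemma \ref{uroots}, replacing the multiplicative group of roots of unity by the additive group $\Bbb Z_n$ of exponents. We may assume $a\neq 0$; otherwise $E=K$ and the Galois group is trivial. Since the characteristic is zero, the polynomial $x^n-1$ is separable. Because $K$ contains all its roots, these roots form a cyclic group $\mu_n\subset K$ of order $n$, generated by some primitive root $\varepsilon$. Fix one root $\alpha\in E$ of $x^n-a$. Then the roots of $x^n-a$ are exactly the elements $\varepsilon^k\alpha$ for $k=0,\dots,n-1$. They are pairwise distinct because $\alpha\neq 0$, so $x^n-a$ is separable and the Galois group $G(E,K)$ is defined. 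Moreover $E=K(\alpha)$, since all the other roots lie in $K(\alpha)$.

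Next we define the map $\pi:G(E,K)\to\Bbb Z_n$. Let $g\in G(E,K)$. Then $g(\alpha)$ is again a root of $x^n-a$, because the coefficients of this polynomial lie in $K$. Hence $g(\alpha)=\varepsilon^{k_g}\alpha$ for a unique residue $k_g\in\Bbb Z_n$, and we set $\pi(g)=k_g$.

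We check that $\pi$ is a homomorphism. For $g,h\in G(E,K)$ we compute $gh(\alpha)=g(\varepsilon^{k_h}\alpha)=\varepsilon^{k_h}g(\alpha)=\varepsilon^{k_h+k_g}\alpha$. The key point is that $g$ fixes $\varepsilon$, because $\varepsilon\in K$; this is where the hypothesis that $K$ contains the roots of unity is used. Therefore $\pi(gh)=\pi(g)+\pi(h)$.

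We check that $\pi$ is injective. If $\pi(g)=0$, then $g$ fixes $\alpha$ and fixes $K$ pointwise. Since $E=K(\alpha)$, the automorphism $g$ is the identity. So $G(E,K)$ is isomorphic to the subgroup $\pi(G(E,K))$ of the cyclic group $\Bbb Z_n$. In particular $G(E,K)$ is commutative, and in fact cyclic.

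No step is a serious obstacle. The only points needing care are the two just noted: that $g$ fixes $\varepsilon$ in the homomorphism computation, and that $\alpha\neq 0$ in the separability argument, which the case $a=0$ handled at the start guarantees.
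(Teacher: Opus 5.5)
Your proof is correct and takes essentially the same route as the paper. You fix a root $\alpha$ and write all roots as $\varepsilon^k\alpha$. You send $g$ to the exponent $k_g$ with $g(\alpha)=\varepsilon^{k_g}\alpha$, use $\varepsilon\in K$ to see that this is a homomorphism into $\Bbb Z_n$, and get injectivity because the roots generate $E$. Your extra care about the case $a=0$ and about the separability of $x^n-a$ fills in details the paper leaves implicit.
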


\begin{proof} Let $x_0$ be a root of the equation $x^n-a=0$. Each  root of this equation can be represented as $x_0 \omega^k$, where $\omega$ is a generator of the multiplicative group of  roots of unity of degree $n$, and the integer $k$ is defined up to addition of a number divisible by $n$.
Thus, one can identify the set of roots of the polynomial $x^n-a$ with the set of integral numbers modulo $n$.

If an element $g\in G(E,K)$ sends
the root $x_0$ to a root $x_0\omega^m$, then it sends a root $x_0 \omega^k$ to the root $x_0\omega^k \omega^m=x_0\omega^{m+k}$ for every integer $k$.  Indeed, by the assumption,    $\omega\in K$, so  we have  $g(\omega)=\omega$ and $g(\omega^k)=\omega^k$.   Thus, under the identification of the set of roots of the polynomial $x^n-a$ with integers modulo $n$, the  action of the element $g$ is  reduced to the addition of the element  $(m\mod n)$ to elements  $k\in \Bbb Z_n$. We obtain a map from the Galois group $G(E,K)$ to the group $\Bbb Z_n$ which sends an element $g$ to an element $m$. One can see that this map is a group homomorphism. This homomorphism has no kernel since the field $E$ is generated over the field $K$ by roots of the equation $x^n-a=0$.
Thus, the homomorphism embeds the Galois group $G(E,K)$ to the group $\Bbb Z_n$.

\end{proof}

\begin{lemma}\label{rroots} Let $E$ be a splitting field over a ground field $K$ of a polynomial $P(x)=(x^n-a_1)\cdot \ldots \cdot (x^n-a_m)$, where $n>1$  is a natural number and $a_1,\dots,a_m$ are different elements of the ground field $K$. Assume that $K$ contains all roots of unity of degree $n$.  Then the Galois group $G(E,K)$ is isomorphic to a subgroup of  the direct sum of $m$ copes $\Bbb Z_n(1),\dots, \Bbb Z_n(m)$ of the additive cyclic  group $\Bbb Z_n$. In particular, the group $G(E,K)$ is commutative.
\end{lemma}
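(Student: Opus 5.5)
We will reduce the statement to Lemma \ref{rroot} by restriction. For each $j=1,\dots,m$ let $E_j\subset E$ be the splitting field over $K$ of the polynomial $x^n-a_j$; it is generated over $K$ by the roots of $x^n-a_j$, all of which lie in $E$. Since $E_j$ is the splitting field over $K$ of a separable polynomial, the argument in the proof of Corollary \ref{factorgroup} applies. Every $g\in G(E,K)$ maps $E_j$ to itself, because $g$ permutes the roots of $x^n-a_j$. Hence restriction gives a homomorphism $\pi_j:G(E,K)\to G(E_j,K)$. Separability of $x^n-a_j$ holds in characteristic zero when $a_j\neq 0$. If some $a_j=0$, its factor $x^n$ contributes only the root $0\in K$, so that factor can be discarded (or $E_j=K$ taken) without changing anything.

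By Lemma \ref{rroot}, each $G(E_j,K)$ embeds into a copy $\Bbb Z_n(j)$ of $\Bbb Z_n$ via some homomorphism $\phi_j$. Explicitly, fix a root $x_j$ of $x^n-a_j$; then $\phi_j(h)=m$ when $h(x_j)=x_j\omega^m$. We define
\[
\Phi:G(E,K)\to \Bbb Z_n(1)\oplus\dots\oplus\Bbb Z_n(m),\qquad \Phi(g)=(\phi_1(\pi_1(g)),\dots,\phi_m(\pi_m(g))).
\]
This map is a group homomorphism, since each component is a composition of homomorphisms.

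It remains to show that $\Phi$ is injective. If $\Phi(g)=0$, then every $\pi_j(g)$ is trivial, because each $\phi_j$ is injective. So $g$ fixes every root of every factor $x^n-a_j$. These roots generate $E$ over $K$, and $g$ fixes $K$ pointwise, so $g$ is the identity on $E$. Thus $G(E,K)$ is isomorphic to the subgroup $\Phi(G(E,K))$ of the direct sum, and in particular it is commutative.

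The main point needing care is the step that each $g$ preserves $E_j$. This follows because $g$ fixes the coefficients of $x^n-a_j$, so it sends roots to roots. Everything else is formal. The hypothesis that the $a_j$ are distinct is not essential to the argument; it only avoids repeated factors.
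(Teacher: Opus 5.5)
Your proof is correct and is essentially the paper's argument: the paper also builds the coordinatewise map $g\mapsto(\pi_1(g),\dots,\pi_m(g))$ into $\Bbb Z_n(1)\oplus\dots\oplus\Bbb Z_n(m)$, where each $\pi_i$ records how $g$ shifts the roots of $x^n-a_i$, and it gets injectivity from the fact that these roots generate $E$ over $K$. The only differences are that you obtain each coordinate by restricting to $E_j$ and citing Lemma~\ref{rroot} rather than repeating its proof, and that you explicitly handle the case $a_j=0$, which the paper silently excludes when it claims that each root set $T_i$ has $n$ elements.
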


\begin{proof} The set of all roots of the polynomial $P(x)$ can be represented as the union $\bigcup_{1\leq i\leq m}T_i$ of the sets $T_i$ of all roots of the polynomials $x^n-a_i$. The sets $T_i$  do not intersect each other, and each of them  contains $n$ elements. The group $G(E,K)$ maps each set $T_i$ to itself, since the set $T_i$ is the set of all roots of the polynomial $x^n-a_i$ over $K$.

Repeating the proof of Lemma \ref{rroot}, one can define a group homomorphisms $\pi_i:G(E,K)\to \Bbb Z_n(i)$ which describes the action of the Galois group on the set $T_i$ of  roots of the polynomial $x^n-a_i$. One can define the homomorphism  $\pi:G(E,K)\to \oplus _{1\leq i\leq m}\Bbb Z_n(i)$ which sends an element $g\in G(E,K)$ to the element $(\pi_1(g),\dots,\pi_m(g))$. The homomorphism $\pi$  has no kernel, since the field $E$ is generated over $K$ by roots of the polynomial $(x^n-a_1)\cdot \ldots \cdot(x^n-a_m)$.
Thus, the homomorphism $\pi$ embeds the Galois group $G(E,K)$ to the direct sum of groups $\Bbb Z_n(i)$.
\end{proof}

 \section{Necessary condition for solvability by radicals}

\begin{theorem} If a polynomial equation $P(x)=0$ over a field $K$ of characteristic zero is solvable by radicals, and   $E$ is the splitting field of the polynomial $P$, then the Galois group $G(EK)$ is solvable.
\end{theorem}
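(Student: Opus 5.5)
Starting from the definition of solvability by radicals, we fix a chain of radical extensions (\ref{radicals}) $K=R_0\subset\dots\subset R_n$ together with a homomorphism $\pi:E\to R_n$ that is the identity on $K$. By Theorem \ref{exorm} we replace this chain by its normalization (\ref{nestedrad}), a nested set of normal radical extensions $K=E_0\subset E_1=K_u\subset E_2\subset\dots\subset E_{n+1}$ with $R_n\subset E_{n+1}$. The extension $K\subset E_{n+1}$ is normal, so its Galois group $G(E_{n+1},K)$ is defined. We will show that this group is solvable and then deduce that $G(E,K)$ is solvable by passing to a quotient.

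For the first step we use Corollary \ref{norm}, so we must check that each Galois group $G(E_{i+1},E_i)$ is commutative. For $i=0$, the field $E_1$ is the splitting field of $x^N-1$ over $K$, and Lemma \ref{uroots} shows that $G(E_1,K)$ embeds in $U(N)$, hence is commutative. For $i\ge1$, the field $E_{i+1}$ is the splitting field over $E_i$ of $\prod_{\omega_m\in O}(x^{k}-\omega_m)$, where the $\omega_m$ are distinct elements of $E_i$ forming an orbit. The exponent $k$ divides $N$, so $E_i\supset E_1$ contains all $k$-th roots of unity. Lemma \ref{rroots} therefore gives that $G(E_{i+1},E_i)$ embeds in $\oplus\,\Bbb Z_k$, hence is commutative. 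Some care is needed in two degenerate cases. If $k=1$, the extension is trivial. If $\omega_m=0$, only $x=0$ is adjoined. In both cases we may drop that factor or step. Corollary \ref{norm} then gives that $G(E_{n+1},K)$ is solvable.

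For the second step we transport $E$ into $E_{n+1}$. The image $B=\pi(E)\subset R_n\subset E_{n+1}$ is a field isomorphic to $E$ over $K$. It is the splitting field over $K$ of $P$, because $\pi$ maps the roots of $P$ to roots of $P$ that generate $B$. In characteristic zero we may take $P$ separable by replacing it with its squarefree part, which has the same splitting field. So $K\subset B\subset E_{n+1}$ with $K\subset B$ normal, and Corollary \ref{factorgroup} gives $G(B,K)\cong G(E_{n+1},K)/G_B(E_{n+1},K)$. A quotient of a solvable group is solvable. Since $\pi$ is an isomorphism onto $B$ fixing $K$, we have $G(E,K)\cong G(B,K)$, so $G(E,K)$ is solvable.

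I expect the main obstacle to be this final step rather than the abelian-factor computations. The concern is the standard group-theoretic fact that quotients of solvable groups are solvable, which the paper has not stated. If needed, we can prove it directly: push the chain $G_i$ down to $G_iH/H$. Each quotient $(G_iH/H)/(G_{i+1}H/H)$ is an image of $G_i/G_{i+1}$ and is therefore commutative. We also need to confirm that the hypotheses of Lemma \ref{rroots} hold exactly, namely that the elements $\omega_m$ are distinct and $k>1$; this is the routine cleanup described above.
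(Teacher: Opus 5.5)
Your proposal is correct and follows essentially the same route as the paper: normalize the radical chain via Theorem~\ref{exorm}, get commutative steps from Lemmas~\ref{uroots} and~\ref{rroots}, conclude that $G(E_{n+1},K)$ is solvable by Corollary~\ref{norm}, and pass to $G(E,K)$ as a quotient via Corollary~\ref{factorgroup}. The extra care you take fills in points the paper leaves implicit: replacing $E$ by $\pi(E)$, the degenerate cases $k=1$ and $\omega_m=0$, separability via the squarefree part, and the fact that quotients of solvable groups are solvable.
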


\begin{proof} If the equation $P=0$ is solvable by radicals, then there is a chain of radical extension $K=R_0\subset\dots\subset R_n$ such that $E\subset R_n$.
By Theorem \ref{exorm}, there exists a normalization $K=E_0\subset K_u=E_1\subset E_2\subset \ldots\subset E_{n+1}$ of the chain of radical extensions for which the inclusions $E_{n+1}\supset R_n\supset E$ hold.  By Lemma \ref{uroots}, the Galois group $G (K_u,K)$ is commutative. By Lemma \ref{rroots}, the Galois groups $G(E_2,E_1),\dots, G(E_{n+1}E_n)$ are commutative. Thus, by  Corollary \ref{norm},  the Galois group  $D=G(E_{n+1},K)$ is solvable.
The splitting field $E$ of the polynomial $P(x)$ is a normal intermediate field  the extension $K\subset E_{n+1}$. By Corollary \ref{factorgroup},  the Galois group $G(E,K)$ is a factor group of the group $G=G(E_{n+1},K)$. Thus, the group $G(E,K)$ is solvable.

 \end{proof}

\end{document}